\newtheorem{thm}{Theorem}[section]
\newtheorem{cor}[thm]{Corollary}
\def\R{{\mathbb R}}
\def\E{{\mathbb E}}
\def\P{{\mathbb P}}
\def\NN{{\mathcal N}}
\def\1{{\mathbf 1}}
\def\a0{{\aleph_0}}
\def\0{{\mathbf{0}}}
\def\eps{{\varepsilon}}
\newcommand{\Lra}{{\Longleftrightarrow}}
\newcommand{\ra}{{\rightarrow}}
\newcommand{\cov}{{\rm cov}}
\newcommand{\Var}{{\rm Var}}
\def\is#1#2{\left\langle #1 , #2 \right\rangle}
\newcommand{\by}{{\bar{y}}}
\newcommand{\bz}{{\bar{z}}}
\title{The square negative correlation property for generalized Orlicz balls}
\author{Jakub Onufry Wojtaszczyk
\\
Department of Mathematics, Computer Science and Mechanics \\ University of Warsaw\\ ul. Banacha 2,
02-097 Warsaw, Poland\\ email: onufry@duch.mimuw.edu.pl}
\date{September 11, 2005}
\begin{document}

\maketitle
\begin{abstract}
Recently Antilla, Ball and Perissinaki proved that the squares of coordinate 
functions in $l_p^n$ are negatively correlated. This paper extends
their results to balls in generalized Orlicz norms on $\R^n$. From this, the concentration
 of the Euclidean
norm and a form of the Central Limit Theorem for the generalized Orlicz balls
is deduced. Also, a counterexample for the square negative correlation hypothesis 
for 1-symmetric bodies is given. 
\end{abstract}

\section{Introduction}
Given a convex, central-symmetric body $K\subset \R^n$ of volume 1, consider the random variable 
$X = (X_1, X_2, \ldots, X_n)$, uniformly distributed on $K$. We are interested in 
determining whether the vector has the {\em square negative correlation}, i.e. if $$\cov(X_i^2,X_j^2) := \E (X_i^2 X_j^2) - \E X_i^2 \E X_j^2 \leq 0.$$ We assume that $K$ is in {\em isotropic 
position}, i.e. that $$\E X_i = 0 \hbox{\ \ \ and \ \ \ } \E X_i \cdot X_j = L_K^2\delta_{ij},$$
where $\delta_{ij}$ is the Kronecker delta and $L_K$ is a positive constant. Since any convex body not supported on an affine subspace has an affine image which is in isotropic position, this is not a restrictive assumption.

The motivation in studying this problem comes from the so-called central limit problem for convex bodies, which is to show that most of the one-dimensional projections of the uniform measure on a convex body are approximately normal. It turns out that the bounds on the square correlation can be crucial to estimating the distance between the one-dimensional projections and the normal distribution (see for instance \cite{ABP}, \cite{meckes}). A related problem is to provide bounds for the quantity $\sigma_K$, 
defined by $$\sigma_K^2 = \frac{\Var (|X|^2)}{nL_K^4} = \frac{n\Var (|X|^2)}{(\E |X|^2)^2},$$ where $X$ is uniformly distributed on $K$.
It is conjectured (see for instance \cite{BK}) that $\sigma_K$ is bounded by a universal constant 
for any convex symmetric isotropic body. Recently Antilla, Ball and Perissinaki (see \cite{ABP}) 
observed that for $K = l_p^n$ the covariances of $X_i^2$ and $X_j^2$ are negative for $i \neq j$,
and from this deduced a bound on $\sigma_K$ in this class. 

In this paper we shall study the covariances of $X_i^2$ and $X_j^2$ (or, more generally, of any
functions depending on a single variable) on a convex, symmetric and isotropic body. We will show a general formula to calculate the covariance for given functions and $K$, and from this formula deduce the covariance of any increasing functions of different variables, in particular of the functions $X_i^2$ and $X_j^2$, has to be negative on generalized Orlicz balls. Then we follow \cite{ABP} to arrive at a concentration property and \cite{meckes} to get a Central Limit Theorem variant for generalized Orlicz balls.

The layout of this paper is as follows. First we define notations which will be used throughout the paper. In Section \ref{gener} we transform the formula for the square correlation into a 
form which will be used further on. In Section \ref{orlic} we use the formula and the Brunn-Minkowski inequality to arrive at the square negative correlation property for generalized Orlicz balls. In Section \ref{conse} we show the corollaries, in particular a central-limit theorem for
generalized Orlicz balls. Section \ref{count} contains another application of the formula from Section \ref{gener}, a simple counterexample for the square negative correlation hypothesis for 1-symmetric bodies.

\paragraph{Notation}

Throughout the paper $K \subset \R^n$ will be a convex central-symmetric body of volume 1 in isotropic position. Recall that by isotropic position we mean that for any vector $\theta \in S^{n-1}$ we have $\int_K \is{\theta}{x}^2 dx = L_K^2$ for some constant $L_K$. 
For $A \subset \R^n$ by $|A|$ we will denote the Lebesgue volume of $A$. 
For $x \in \R^n$, $|x|$ will mean the Euclidean norm of $x$. We assume that 
$\R^n$ is equipped with the standard Euclidean structure and with the 
canonic orthonormal base $(e_1,\ldots,e_n)$. For $x \in \R^n$ by $x_i$ we
shall denote the $i$th coordinate of $x$, i.e. $\is{e_i}{x}$. We will
consider $K$ as a probability space with the Lebesgue measure restricted to $K$ as
the probability measure. If there is any danger of confusion, then $\mathbb{P}_K$ will denote the probability with respect to this measure, $\E_K$ will denote the expected value with respect to $\mathbb{P}_K$, and so on. By $X$ we will usually denote the $n$-dimensional random vector
equidistributed on $K$, while $X_i$ will denote its $i$th coordinate. By the covariance
$\cov(Y,Z)$ for real random variables $Y$, $Z$ we mean $\E (Y Z) - \E Y \E Z$. By an 1-symmetric
body $K$ we mean one that is invariant under reflections in the coordinate hyperplanes, or equivalently, such a body that $(x_1,x_2,\ldots,x_n) \in X \Lra (\eps_1 x_1,\eps_2 x_2,\ldots,\eps_n x_n \in X)$ for any choice of $\eps_i \in \{-1,1\}$. The parameter $\sigma_K$, as in \cite{BK}, will be defined by $$\sigma_K^2 = \frac{\Var (|X|^2)}{nL_K^4} = \frac{n\Var (|X|^2)}{(\E |X|^2)^2}.$$

For any $n\geq 1$ and convex increasing functions $f_i : [0,\infty) \ra [0, \infty)$, $i = 1,\ldots, n$ satisfying $f_i(0) = 0$ (called the Young functions) we define the generalized Orlicz ball $K \subset \R^n$ to be the set of points $x = (x_1,\ldots,x_n)$ satisfying $$\sum_{i=1}^n f_i(|x_i|) \leq 1.$$ This is easily proven to be convex, symmetric and bounded, thus 
$$\|x\| = \inf \{ \lambda : x \in \lambda K\}$$ defines a norm on $\R^n$. In the case of equal
functions $f_i$ the norm is called an {\em Orlicz norm}, in the general case a {\em generalized Orlicz} norm. Examples of Orlicz norms include the $l_p$ norms for any $p \geq 1$ with $f(t) = |t|^p$ being the Young functions. The generalized Orlicz spaces are also referred to as modular sequence spaces (I thank the referee for pointing this out to me).

\section{The general formula}\label{gener}
We wish to calculate $\cov (f(X_i), g(X_j))$, where $f$ and $g$ are univariate functions, $i \neq j$ and $X_i,X_j$ are the coordinates of the random vector $X$, equidistributed on a convex, symmetric and isotropic body $K$. For simplicity we will assume $i = 1$, $j = 2$ and denote $X_1$ by $Y$ and
$X_2$ by $Z$. For any $(y,z) \in \R^2$ let $m(y,z)$ be equal to the $n-2$-dimensional Lebesgue
measure of the set $(\{(y,z)\}\times \R^{n-2}) \cap K$. We set out to prove:
\begin{thm} \label{general} For any symmetric, convex body $K$ in isotropic position and any functions $f$, $g$ we have $$\cov(f(Y),g(Z)) = \int_{\R^4, |y| > |\by|, |z|>|\bz|}
\big(m(y,z)m(\by,\bz) - m(y,\bz)m(\by,z)\big) \big(f(y) - f(\by)\big)\big(g(z) -
g(\bz)\big).$$\end{thm}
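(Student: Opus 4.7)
The plan is to rewrite the covariance via an independent copy of the distribution and then symmetrize the resulting integral. Let $(\bar Y, \bar Z)$ be an independent copy of $(Y,Z)$; their joint density on $\R^4$ is $m(y,z)\,m(\bar y,\bar z)$. A short computation (expanding and using that $(Y,\bar Z)$ and $(\bar Y, Z)$ are pairs of independent variables with the correct marginals) gives the standard identity
\[
\cov(f(Y),g(Z)) \;=\; \tfrac12\,\E\bigl[(f(Y)-f(\bar Y))(g(Z)-g(\bar Z))\bigr],
\]
so
\[
\cov(f(Y),g(Z)) \;=\; \tfrac12\int_{\R^4}\!\bigl(f(y)-f(\bar y)\bigr)\bigl(g(z)-g(\bar z)\bigr)\,m(y,z)\,m(\bar y,\bar z)\,dy\,d\bar y\,dz\,d\bar z.
\]

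Next I antisymmetrize in the $y$-variables. Relabeling integration variables $y\leftrightarrow \bar y$ in the integral above shows that it equals its own negative with $m(y,z)m(\bar y,\bar z)$ replaced by $m(\bar y,z)m(y,\bar z)$. Averaging the two expressions produces
\[
\cov(f(Y),g(Z)) \;=\; \tfrac14\int_{\R^4}\!\bigl(f(y)-f(\bar y)\bigr)\bigl(g(z)-g(\bar z)\bigr)\bigl[m(y,z)m(\bar y,\bar z)-m(\bar y,z)m(y,\bar z)\bigr]\,dy\,d\bar y\,dz\,d\bar z,
\]
which is essentially the claimed integrand, but still integrated over all of $\R^4$.

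The final step is to fold the integration down to the region $\{|y|>|\bar y|,\ |z|>|\bar z|\}$. The key observation is that the new integrand is \emph{invariant} under each of the swaps $y\leftrightarrow\bar y$ and $z\leftrightarrow\bar z$, because in each swap both the corresponding $f$-difference (or $g$-difference) and the bracket $[m(y,z)m(\bar y,\bar z)-m(\bar y,z)m(y,\bar z)]$ flip sign, so the product is preserved. Since $y\leftrightarrow\bar y$ sends $\{|y|>|\bar y|\}$ bijectively to $\{|y|<|\bar y|\}$ (and similarly for $z$), the integral over $\{|y|>|\bar y|,\ |z|>|\bar z|\}$ equals exactly a quarter of the integral over $\R^4$. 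The factor of $4$ cancels the $\tfrac14$ in front, yielding the stated formula.

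There is no real obstacle here: the proof is essentially a chain of symmetrization identities, and the subtlest point is simply bookkeeping — keeping the factor of $\tfrac12$ from the independent-copy identity, the factor of $\tfrac12$ from the first antisymmetrization, and the factor of $4$ from the domain restriction in sync so they cancel exactly. Note that convexity, symmetry of $K$, and isotropicity play no role in this formula; they only become essential in later sections, when one wants to sign-determine the bracket $m(y,z)m(\bar y,\bar z)-m(\bar y,z)m(y,\bar z)$ via Brunn–Minkowski.
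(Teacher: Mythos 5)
Your proof is correct and follows the same symmetrization strategy as the paper. You begin from the independent-copy identity $\cov(f(Y),g(Z))=\tfrac12\,\E\bigl[(f(Y)-f(\bar Y))(g(Z)-g(\bar Z))\bigr]$, which cleanly packages the paper's first relabeling $(y,z)\leftrightarrow(\bar y,\bar z)$ together with the subtraction $\E fg-\E f\,\E g$ (and spares you the paper's ``multiply by $\E 1$'' device); the subsequent antisymmetrization in $y$ producing the bracket $m(y,z)m(\bar y,\bar z)-m(\bar y,z)m(y,\bar z)$ and the fold of the domain onto $\{|y|>|\bar y|,\ |z|>|\bar z|\}$ by symmetry are exactly the paper's steps. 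Your closing observation that convexity, central symmetry, and isotropy play no role in this identity is also accurate.
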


Furthermore, for 1-symmetric bodies and symmetric functions we will have the following corrolary:
\begin{cor} \label{uncond} For any symmetric, convex, uncondtitional body $K$ in isotropic position and symmetric functions $f$, $g$ we have $$\cov(f(Y),g(Z)) =16 \  \int_{\R^4, y > \by > 0, z>\bz > 0}
\big(m(y,z)m(\by,\bz) - m(y,\bz)m(\by,z)\big)\big(f(y) - f(\by)\big)\big(g(z) -
g(\bz)\big).$$\end{cor}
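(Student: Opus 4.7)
The approach is simply to apply Theorem \ref{general} and then exploit the extra symmetries afforded by the 1-symmetry of $K$ and the evenness of $f,g$ to fold the domain $\{|y|>|\by|,\ |z|>|\bz|\}\subset \R^4$ down onto its positive octant part $\{y>\by>0,\ z>\bz>0\}$, picking up a combinatorial factor.

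First I would record the two symmetry observations we get for free. Because $K$ is 1-symmetric, reflecting any coordinate in a coordinate hyperplane fixes $K$, and since $m(y,z)$ is the $(n-2)$-volume of the slice $(\{(y,z)\}\times \R^{n-2})\cap K$, we have
$$m(y,z)=m(|y|,|z|)\quad\text{for all }(y,z)\in\R^2.$$
And since $f,g$ are symmetric, $f(y)=f(|y|)$, $g(z)=g(|z|)$. In particular the entire integrand
$$I(y,\by,z,\bz):=\bigl(m(y,z)m(\by,\bz)-m(y,\bz)m(\by,z)\bigr)\bigl(f(y)-f(\by)\bigr)\bigl(g(z)-g(\bz)\bigr)$$
appearing in Theorem~\ref{general} depends only on $(|y|,|\by|,|z|,|\bz|)$; i.e.\ it is invariant under each of the four sign flips $y\mapsto -y$, $\by\mapsto-\by$, $z\mapsto-z$, $\bz\mapsto-\bz$.

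Next I would partition the domain of integration. The set $\{|y|>|\by|,\ |z|>|\bz|\}$ decomposes, up to a set of measure zero, into $2^4=16$ pieces indexed by the sign pattern of $(y,\by,z,\bz)$, each piece being the image of $\{y>\by>0,\ z>\bz>0\}$ under a product of sign flips (the inequalities $|y|>|\by|$ and $|z|>|\bz|$ are preserved by any choice of signs). Because the integrand $I$ is invariant under all such sign flips (and each flip is a measure-preserving linear isomorphism of $\R^4$), each of the 16 pieces contributes exactly
$$\int_{y>\by>0,\ z>\bz>0} I(y,\by,z,\bz)\,dy\,d\by\,dz\,d\bz$$
to the integral given by Theorem~\ref{general}. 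Summing the 16 identical contributions yields the claimed factor of $16$, establishing the corollary.

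There is essentially no hard step here; it is purely a symmetrization/bookkeeping argument on top of Theorem~\ref{general}. The one thing worth checking carefully is that the sign-flip invariance really does apply to every one of the four factors $m(y,z)$, $m(\by,\bz)$, $m(y,\bz)$, $m(\by,z)$ simultaneously, which it does because 1-symmetry gives $m(y,z)=m(\pm y,\pm z)$ independently in each slot; once that is observed, the 16-fold multiplicity of the positive-octant region is immediate.
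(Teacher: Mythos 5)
Your argument is correct and is exactly the paper's own (brief) justification: the paper notes that for symmetric $f,g$ and 1-symmetric $K$ the integrand is invariant under sign changes of each variable, so one may restrict to the positive region; your write-up just spells out the resulting $2^4=16$ multiplicity explicitly.
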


The corollary is a simple consequence of the fact that for symmetric functions $f$ and $g$ and an 1-symmetric body $K$ the integrand is invariant under the change of the sign of any of the variables, so we may assume all of them are positive.

As concerns the sign of $\cov(f,g)$, which is what we set out to determine, we have the following simple corollary:
\begin{cor} \label{signuncond} For any central-symmetric, convex, 1-symmetric body $K$ in isotropic position and symmetric functions $f$, $g$ that are non-decreasing on $[0,\infty)$ if for all $y > \by > 0$, $z > \bz > 0$ we have \begin{equation}\label{crossmass}m(y,\bz) m(\by,z) \geq m(y,z) m(\by,\bz),\end{equation} then $$\cov(f,g) \leq 0.$$ Similarly, if the opposite inequality is satisfied for all $y > \by >0 $ and $z > \bz > 0$, then the covariance is non-negative.\end{cor}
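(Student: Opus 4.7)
The plan is to derive this directly from Corollary \ref{uncond}, which already expresses $\cov(f(Y),g(Z))$ as a single integral over the region $\{y > \bar y > 0,\ z > \bar z > 0\}$ with integrand
$$\bigl(m(y,z)m(\bar y,\bar z) - m(y,\bar z)m(\bar y,z)\bigr)\bigl(f(y)-f(\bar y)\bigr)\bigl(g(z)-g(\bar z)\bigr).$$
With that formula in hand, the statement reduces to a pointwise sign analysis of the integrand on the domain of integration, so the proof should be essentially a one-liner with three bullet points wrapped in a sentence.

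First I would remark that Corollary \ref{uncond} applies because $K$ is $1$-symmetric and the functions $f$, $g$ are assumed symmetric (even), so the integrand is invariant under sign changes of $y,\bar y,z,\bar z$ and integration reduces to the positive octant with a factor of $16$. Then I would examine each of the three factors of the integrand on the set $\{y > \bar y > 0,\ z > \bar z > 0\}$: the hypothesis (\ref{crossmass}) says precisely that $m(y,z)m(\bar y,\bar z) - m(y,\bar z)m(\bar y,z) \leq 0$, while the monotonicity of $f$ and $g$ on $[0,\infty)$ together with $y > \bar y > 0$ and $z > \bar z > 0$ gives $f(y) - f(\bar y) \geq 0$ and $g(z) - g(\bar z) \geq 0$. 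Thus the integrand is non-positive pointwise on the domain of integration, so its integral, and hence $\cov(f(Y),g(Z))$, is non-positive.

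For the opposite inequality the argument is identical except that the mass factor becomes non-negative, while the $f$- and $g$-factors are unchanged, yielding a non-negative integrand and hence $\cov(f(Y),g(Z)) \geq 0$.

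Honestly, there is no real obstacle here: given Corollary \ref{uncond}, the statement is an immediate consequence of the sign analysis, and the only thing worth checking carefully is that the symmetry and monotonicity assumptions on $f$ and $g$ are used \emph{only} to guarantee, respectively, the applicability of Corollary \ref{uncond} and the non-negativity of $f(y)-f(\bar y)$ and $g(z)-g(\bar z)$ on the relevant domain; both checks are trivial.
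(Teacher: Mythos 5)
Your proposal is correct and follows essentially the same route as the paper: invoke Corollary~\ref{uncond} to reduce to the integral over the positive octant, then observe that the monotonicity of $f,g$ makes the last two factors non-negative there while hypothesis~(\ref{crossmass}) fixes the sign of the first factor, so the integrand (and hence the integral) has the claimed sign. No difference in substance.
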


\begin{proof}The second and third bracket of the integrand in Corollary \ref{uncond} is positive under the assumptions of Corollary \ref{signuncond}. Thus if we assume the first bracket is negative, then the whole integrand is negative, which implies the integral is negative, and vice-versa.\end{proof}

\begin{proof}[Proof of Theorem \ref{general}] We have $$\cov(f(Y), g(Z)) = \E f(Y) g(Z) - \E f(Y) \E g(Z).$$ From the Fubini theorem we have 
$$\E f(Y) g(Z) = \int_{R^2} m(y,z) f(y) g(z),$$ and similar equations for $\E f(Y)$ and $\E g(Z)$. 

For any function $h$ of two variables $a,b \in A$ we can write $\int_{A^2} h(a,b) = \int_{A^2} h(b,a) = \frac{1}{2} \int_{A^2} h(a,b) + h(b,a)$. We shall repeatedly use this trick to transform the formula
for the covariance of $f$ and $g$ into the required form:

\begin{eqnarray*}
\E f(Y) \E g(Z) &=& \int_{\R^2} m(y,z) f(y)  \int_{\R^2} m(\by,\bz) g(\bz)  \\ &=&
\int_{\R^4} m(y,z) m(\by,\bz) f(y)
g(\bz) = \int_{\R^4} m(\by,\bz) m(y,z) f(\by) g(z) =\\ &=&
\frac{1}{2} \int_{\R^4} m(\by,\bz) m(y,z) \big(f(\by) g(z) + f(y)
g(\bz)\big).\end{eqnarray*}

We repeat this trick, exchanging $z$ and $\bz$ (and leaving $y$ and $\by$ unchanged):

\begin{eqnarray*}
\E f(Y) \E g(Z) &=& \frac{1}{4}
\int_{\R^4} m(\by,\bz) m(y,z) \big(f(y) g(\bz) + f(\by) g(z)\big) +
m(\by,z) m(y,\bz) \big(f(y) g(z)+ f(\by) g(\bz)\big).\end{eqnarray*}

We perform the same operations on the second part of the covariance. To get a integral over
$\R^4$ we multiply by an $\E 1$ factor (this in effect will free us from the assumption that
the body's volume is 1):
\begin{eqnarray*}
\E f(Y) g(Z) \E 1 &=& \int_{\R^4} m(y,z) m(\by,\bz) f(y)
g(z) \\ &=& \frac{1}{4} \int_{\R^4} m(y,z)
m(\by,\bz) \big(f(y)g(z) + f(\by)g(\bz)\big) + m(y,\bz)m(\by,z)
\big(f(y)g(\bz) + f(\by)g(z)\big).\end{eqnarray*}

Thus:

\begin{eqnarray*} &&\cov (f(Y),g(Z)) = \E (f(Y) g(Z)) \E 1 - \E f(Y) \E g(Z)
=\\&=& \frac{1}{4} \bigg(\int_{\R^4} m(y,z) m(\by,\bz) \big(f(y)g(z) + f(\by)g(\bz)\big) +
m(y,\bz)m(\by,z) \big(f(y)g(\bz) + f(\by)g(z)\big)
-\\&&-
m(\by,\bz) m(y,z) \big(f(y)
g(\bz) + f(\by) g(z)\big) - m(\by,z) m(y,\bz) \big(f(y) g(z)+ f(\by)
g(\bz)\big)\bigg)
=\\&=&
\frac{1}{4}\int_{\R^4} \bigg(\big(m(y,\bz)m(\by,z) - m(y,z)m(\by,\bz)\big) \big(f(y)
g(\bz) + f(\by) g(z)\big) +\\&&+ \big(m(y,z)m(\by,\bz) -
m(\by,z) m(y,\bz)\big) \big(f(y)g(z) + f(\by)g(\bz)\big)\bigg) =\\&=&
\frac{1}{4}\int_{\R^4} \big(m(y,\bz)m(\by,z) - m(y,z)m(\by,\bz)\big) \big(f(y)
g(\bz) + f(\by) g(z) - f(y)g(z) - f(\by)g(\bz)\big) =\\&=&
\frac{1}{4}\int_{\R^4} \big(m(y,\bz)m(\by,z) - m(y,z)m(\by,\bz)\big) \big(f(y)
- f(\by)\big)\big(g(\bz) - g(z)\big)
\end{eqnarray*}

Finally, notice that if we exchange $y$ and $\by$ in the above formula, then 
the formula's value will not change --- the first and second bracket will change signs, and
the third will remain unchanged. The same applies to exchanging $z$ and $\bz$. Thus
$$\cov(f,g) = \int_{\R^4, |y| > |\by|, |z|>|\bz|}
\big(m(y,z)m(\by,\bz) - m(y,\bz)m(\by,z)\big) \big(f(y) - f(\by)\big)\big(g(z) -
g(\bz)\big).$$

\end{proof}

\section{Generalized Orlicz spaces}\label{orlic}

Now we will concentrate on the case of symmetric, non-decreasing functions on generalized Orlicz spaces. We will prove the inequality (\ref{crossmass}):

\begin{thm} \label{gorb} If $K$ is a ball in an generalized Orlicz norm on $\R^n$, then for any $y > \by > 0$ and $z > \bz > 0$ we have \begin{equation}m(y,\bz) m(\by,z) \geq m(y,z) m(\by,\bz).\label{eq1} \end{equation}\end{thm}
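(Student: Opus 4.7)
The plan is to reduce the claim to the log-concavity of a single auxiliary function, and then get that log-concavity from the Brunn--Minkowski inequality.

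First I would observe that for a generalized Orlicz ball defined by $\sum_{i=1}^n f_i(|x_i|)\le 1$, the slice mass $m(y,z)$ depends on $(y,z)$ only through the residual budget $1-f_1(|y|)-f_2(|z|)$. Concretely, define
\[
C(t)=\big|\{(x_3,\ldots,x_n)\in\R^{n-2}:\textstyle\sum_{i=3}^n f_i(|x_i|)\le t\}\big|\quad(t\ge0),\qquad C(t)=0\ (t<0),
\]
so that $m(y,z)=C\!\bigl(1-f_1(|y|)-f_2(|z|)\bigr)$. Setting $a=f_1(y)$, $\bar a=f_1(\bar y)$, $b=f_2(z)$, $\bar b=f_2(\bar z)$, the Young functions being increasing gives $a>\bar a\ge 0$ and $b>\bar b\ge 0$. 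Writing
\[
\al=1-a-\bar b,\quad \beta=1-\bar a-b,\quad \gam=1-a-b,\quad \del=1-\bar a-\bar b,
\]
one checks $\al+\beta=\gam+\del$ and $\gam\le\al,\beta\le\del$. So the inequality (\ref{eq1}) becomes
\[
C(\al)C(\beta)\ge C(\gam)C(\del),\qquad \gam\le\al,\beta\le\del,\ \al+\beta=\gam+\del.
\]
Writing $\al=\lambda\gam+(1-\lambda)\del$ for some $\lambda\in[0,1]$ forces $\beta=(1-\lambda)\gam+\lambda\del$, and log-concavity of $C$ immediately yields the two inequalities $C(\al)\ge C(\gam)^\lambda C(\del)^{1-\lambda}$ and $C(\beta)\ge C(\gam)^{1-\lambda}C(\del)^\lambda$, which multiply to the desired bound.

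It therefore remains to show that $C$ is log-concave on $\R$. For this I would consider the set
\[
S=\big\{(x_3,\ldots,x_n,t)\in\R^{n-1}:\textstyle\sum_{i=3}^n f_i(|x_i|)\le t\big\},
\]
which is convex because each $f_i$ is convex and increasing with $f_i(0)=0$, hence $f_i(|\cdot|)$ is a convex even function on $\R$, and sums of convex functions are convex; $S$ is precisely the epigraph. Since $C(t)$ is the $(n-2)$-dimensional volume of the horizontal slice of the convex set $S$ at height $t$, the Brunn--Minkowski inequality gives that $C^{1/(n-2)}$ is concave on its support (and $0$ off it). Concavity of a nonnegative function together with concavity and monotonicity of $\log$ yield that $\log C$ is concave on $(-\infty,\infty)$ (with the convention $\log 0=-\infty$), i.e. $C$ is log-concave, as required.

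The main obstacle, if any, is the initial reduction: one has to spot that $m$ factors through a single real parameter and that the four relevant values of that parameter lie in the majorization configuration suitable for log-concavity. Once that is set up, the convexity of $\sum f_i(|x_i|)$ (which is where the Orlicz structure and the assumptions on the $f_i$ enter) is routine, and Brunn--Minkowski handles the rest. Minor technicalities — for instance, that $C$ may vanish at the upper end of the relevant range, or the edge case $\bar a=0$ or $\bar b=0$ — are absorbed by the convention $\log 0=-\infty$ since the inequality $C(\al)C(\beta)\ge C(\gam)C(\del)$ is trivially preserved when the right-hand side is zero.
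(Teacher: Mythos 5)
Your proof is correct and follows essentially the same route as the paper: both reduce the inequality to log-concavity of the function $t\mapsto\big|\{(x_3,\dots,x_n):\sum_{i\ge3}f_i(|x_i|)\le t\}\big|$ and invoke Brunn--Minkowski, then multiply two log-concavity estimates at the four points sharing the same sum. The only cosmetic difference is the auxiliary convex set used to apply Brunn--Minkowski --- you slice the (unbounded) epigraph of $\sum_{i\ge3}f_i(|x_i|)$, while the paper slices the bounded generalized Orlicz ball obtained by replacing $f_1,f_2$ with the identity $\Phi_1(t)=t$; both yield the same log-concavity statement.
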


From this Theorem and Corollary \ref{signuncond} we get
\begin{cor} \label{negcov} If $K$ is a ball in an generalized Orlicz norm on $\R^n$ and $f, g$ are symmetric functions that are non-decreasing on $[0,\infty)$, then $\cov_K(f,g) \leq 0$.\end{cor}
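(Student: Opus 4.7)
The plan is to observe that for a generalized Orlicz ball the slice measure $m(y,z)$ depends on $(y,z)$ only through the ``budget'' $1-f_1(|y|)-f_2(|z|)$ that is left for the remaining coordinates, and then to deduce (\ref{eq1}) from log-concavity of the resulting single-variable function, which in turn comes from Brunn-Minkowski.

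Concretely, I would define
$$\phi(t) = \left|\bigl\{(x_3,\ldots,x_n)\in\R^{n-2}: \textstyle\sum_{i=3}^n f_i(|x_i|)\le t\bigr\}\right|$$
for $t>0$ and $\phi(t)=0$ for $t\le 0$. Since the Orlicz constraint decouples across coordinates, one immediately has $m(y,z)=\phi(1-f_1(|y|)-f_2(|z|))$. Setting
$$A=1-f_1(y)-f_2(z), \quad B=1-f_1(\by)-f_2(\bz), \quad C=1-f_1(y)-f_2(\bz), \quad D=1-f_1(\by)-f_2(z),$$
one has $A\le C,D\le B$ and $A+B=C+D$, so $C$ and $D$ are convex combinations of $A$ and $B$ with conjugate weights. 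Inequality (\ref{eq1}) becomes $\phi(C)\phi(D)\ge\phi(A)\phi(B)$, which is a direct consequence of log-concavity of $\phi$. (If $A\le 0$ then $m(y,z)=0$ and the inequality is trivial.)

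For the log-concavity of $\phi$, note that $F(x_3,\ldots,x_n)=\sum_{i=3}^n f_i(|x_i|)$ is convex (a sum of compositions of convex nondecreasing functions with $|\cdot|$), so its epigraph
$$\tilde L=\{(x_3,\ldots,x_n,s)\in\R^{n-2}\times\R: F(x_3,\ldots,x_n)\le s\}$$
is a convex set in $\R^{n-1}$ whose section at height $s$ has Lebesgue measure $\phi(s)$. Brunn-Minkowski then gives that $\phi^{1/(n-2)}$ is concave on $\{\phi>0\}$, and any nonnegative concave function is log-concave (by AM-GM), so $\phi$ itself is log-concave.

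The case $n=2$ is degenerate (Brunn-Minkowski gives nothing) and has to be dispatched by hand: there $m\in\{0,1\}$, and whenever $m(y,z)=1$, monotonicity of $f_1,f_2$ forces $f_1(y)+f_2(\bz)\le 1$ and $f_1(\by)+f_2(z)\le 1$, so $m(y,\bz)=m(\by,z)=1$ and the inequality is automatic. I do not foresee a real obstacle beyond these steps; the only mildly delicate point is recognizing that what is needed is log-concavity of $\phi$ (not just concavity of some root), and that the boundary cases where $A\le 0$ are vacuous.
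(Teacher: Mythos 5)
Your proposal is correct and takes essentially the same route as the paper. The paper packages the reduction by building an auxiliary generalized Orlicz ball $K'\subset\R^{n-1}$ with first Young function $\Phi_1(t)=t$ and applying Brunn-Minkowski to its coordinate slices $P_x$, whereas you work with the epigraph of $F(x_3,\ldots,x_n)=\sum_{i\ge 3}f_i(|x_i|)$; these convex sets agree up to an affine change of variable ($x_1\mapsto 1-s$), and your four-point inequality $\phi(C)\phi(D)\ge\phi(A)\phi(B)$ with $A+B=C+D$ is identical to the paper's $|P_{a+b}|\,|P_{a+c}|\ge|P_a|\,|P_{a+b+c}|$ after this reparametrization. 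Your explicit treatment of the case $A\le 0$ and of $n=2$ supplies details the paper leaves implicit but is not a different argument.
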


It now remains to prove the inequality (\ref{eq1}).
\begin{proof}[Proof of Theorem \ref{gorb}] Let $f_i$ denote the Young functions of $K$. Let us consider the ball $K' \subset \R^{n-1}$, being an generalized Orlicz ball defined by the Young functions $\Phi_1, \Phi_2, \ldots, \Phi_{n-1}$, where $\Phi_i(t) = f_{i+1}(t)$ for $i > 1$ and $\Phi_1(t) = t$ --- that is, we replace the first two Young functions of $K$ by a single identity function.

For any $x\in \R$ let $P_x$ be the set $(\{x\} \times \R^{n-2}) \cap K'$, and $|P_x|$ be its $n-2$-dimensional
Lebesgue measure. $K'$ is a convex set, thus, by the Brunn-Minkowski inequality (see for instance \cite{ff}) the function $x \mapsto |P_x|$ is a logarithmically concave function. This means that $x \mapsto \log |P_x|$ is a concave function, or equivalently that $$|P_{tx + (1-t)y}| \geq |P_x|^t \cdot |P_y|^{1-t}.$$ In particular, for given real positive numbers $a$, $b$, $c$ we have 
$$|P_{a+c}| \geq |P_a|^{b \slash (b+c)} |P_{a+b+c}|^{c \slash
(b+c)},$$
$$|P_{a+b}| \geq |P_a|^{c \slash (b+c)} |P_{a+b+c}|^{b \slash
(b+c)},$$ and as a consequence when we multiply the two inequalities, 
\begin{equation}|P_{a+b}| \cdot |P_{a+c}| \geq |P_a| \cdot |P_{a+b+c}|.\label{ff}\end{equation}

Now let us consider the ball $K$. Let us take any $y > \by > 0$ and $z > \bz > 0$. Let 
$a = f_1(\by) + f_2(\bz)$, $b = f_1(y) - f_1(\by)$, and
$c = f_2(z) - f_2(\bz)$. The numbers $a$, $b$ and $c$ are positive from the assumptions
on $y$, $z$, $\by$ and $\bz$ and because the Young functions are increasing. Then $m(\by,\bz)$ is equal to the measure of the set 
$$\{x_3,x_4,\ldots,x_n : f_1(\by) + f_2(\bz) + \sum_{i=3}^n f_i(x_i) \leq 1\} = \{x_3,x_4,\ldots,x_n : a + \sum_{i=2}^n \Phi_i(x_i) \leq 1\} = P_a.$$
Similarly $m(y,\bz) = |P_{a+b}|$, $m(\by,z) = |P_{a+c}|$ i
$m(y,z) = |P_{a+b+c}|$.  

Substituting those values into the inequality (\ref{ff}) we get the thesis: $$m(y,\bz) m(\by,z) \geq m(y,z) m(\by,\bz).$$\end{proof}

\section{The consequences}\label{conse}

For the consequences we will take $f(t) = g(t) = t^2$. The first simple consequence is the concentration property for generalized Orlicz balls. Here, we follow the argument of \cite{ABP} for $l_p$ balls. 

\begin{thm} For every generalized Orlicz ball $K \subset \R^n$ we have $$\sigma_K \leq \sqrt{5}.$$\end{thm}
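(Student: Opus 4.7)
The plan is to expand the variance of $|X|^2 = \sum_{i=1}^n X_i^2$ by bilinearity of covariance as
$$\Var(|X|^2) = \sum_{i=1}^n \Var(X_i^2) + \sum_{i \neq j} \cov(X_i^2, X_j^2),$$
and kill the off-diagonal sum using the correlation estimate already established. A generalized Orlicz ball is 1-symmetric, since its defining inequality $\sum_i f_i(|x_i|) \leq 1$ is invariant under sign changes of each coordinate. Hence Corollary \ref{negcov} applies to the symmetric functions $f(t) = g(t) = t^2$, which are non-decreasing on $[0,\infty)$, giving $\cov(X_i^2, X_j^2) \leq 0$ for every $i \neq j$. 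So
$$\Var(|X|^2) \;\leq\; \sum_{i=1}^n \Var(X_i^2) \;=\; \sum_{i=1}^n \bigl(\E X_i^4 - L_K^4\bigr).$$

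Next I would show the one-dimensional bound $\E X_i^4 \leq 6 L_K^4$. The marginal of $X$ onto the $i$-th coordinate is symmetric (by 1-symmetry of $K$) and has a log-concave density (by Pr\'ekopa--Leindler / Brunn--Minkowski applied to the indicator of the convex body $K$). For any symmetric log-concave random variable $Y$ on $\R$ one has the sharp moment comparison $\E Y^4 \leq 6(\E Y^2)^2$, with equality attained by the two-sided exponential $\tfrac{\lambda}{2}e^{-\lambda|y|}$ (a direct calculation gives $\E Y^2 = 2/\lambda^2$ and $\E Y^4 = 24/\lambda^4$). Applied to $Y = X_i$, this yields $\Var(X_i^2) \leq 5 L_K^4$, so summing over $i$ gives $\Var(|X|^2) \leq 5 n L_K^4$, i.e.\ $\sigma_K^2 \leq 5$.

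The main obstacle is the sharp one-dimensional moment inequality $\E Y^4 \leq 6(\E Y^2)^2$ for symmetric log-concave $Y$. This is classical and can be deduced from Berwald's inequality comparing moments of log-concave densities, or established via a variational argument showing that among symmetric log-concave densities with fixed second moment the fourth moment is maximized by the two-sided exponential. Any weaker bound $\E Y^4 \leq C(\E Y^2)^2$ would give $\sigma_K^2 \leq C-1$, so reaching precisely the stated constant $\sqrt 5$ requires the sharp value $C = 6$.
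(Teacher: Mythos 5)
Your proof is correct and follows essentially the same route as the paper: decomposing the variance of $|X|^2$, discarding the off-diagonal covariances by the negative square correlation (Corollary \ref{negcov}), and controlling the diagonal terms via the sharp bound $\E X_i^4 \leq 6(\E X_i^2)^2$ for symmetric log-concave one-dimensional marginals. The paper writes the same computation as a two-sided squeeze on $\E|X|^4$ rather than directly via bilinearity of covariance, and cites \cite{klo} for the moment inequality, but the substance is identical.
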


\begin{proof} From the Cauchy-Schwartz inequality we have
$$ n^2L_K^4 = \bigg(\sum_{i=1}^n \E_K X_i^2 \bigg)^2 = \bigg(\E_K |X|^2 \bigg)^2  \leq \E_K |X|^4.$$ On the other hand from Corollary \ref{negcov} we have
\begin{eqnarray*} \E_K|X|^4 &=& \E_K \bigg(\sum_{i=1}^nX_i^2\bigg)^2 = \sum_{i=1}^n \E_K X_i^4 + \sum_{i\neq j} \E_K X_i^2 X_j^2  \\ &\leq& \sum_{i=1}^n \E_K X_i^4 + \sum_{i\neq j} \E_K X_i^2  \E_K X_j^2 \\ &=& \sum_{i=1}^n \E_K X_i^4 + n(n-1)L_K^4 .\end{eqnarray*}

As for 1-symmetric bodies the density of $X_i$ is symmetric and log-concave, we know (see e.g. \cite{klo}, Section 2, Remark 5) $$\E_K X_i^4 \leq 6 \bigg(\E_K X_i^2\bigg)^2 = 6 L_K^4,$$ thence $$n^2L_K^4 \leq \E_K |X|^4 \leq (n^2 + 5n)L_K^4.$$ This gives us $$\Var(|X|^2) = \E_K|X|^4 - n^2 L_K^4 \leq 5n L_K^4,$$
and thus $$\sigma_K^2 = \frac{\Var|X|^2}{nL_K^4} \leq 5.$$\end{proof}

\begin{cor} For every generalized Orlicz ball $K \subset \R^n$ and for every $t > 0$ we have $$\mathbb{P}_K\bigg(\bigg|\frac{|X|^2}{n} - L_K^2\bigg| \geq t\bigg) \leq \frac{5 L_K^4}{nt^2}$$ and $$\mathbb{P}_K\bigg(\bigg|\frac{|X|}{\sqrt{n}} - L_K\bigg| \geq t\bigg) \leq \frac{5 L_K^2}{nt^2}$$\label{corcor} \end{cor}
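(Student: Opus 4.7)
The plan is to deduce both inequalities from the bound $\Var(|X|^2)\leq 5n L_K^4$ established in the preceding theorem, applying Chebyshev's inequality.

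For the first inequality, I would observe that $\E_K|X|^2 = \sum_i \E_K X_i^2 = nL_K^2$, so Chebyshev's inequality gives
\[
\mathbb{P}_K\bigl(\bigl||X|^2 - nL_K^2\bigr| \geq s\bigr) \leq \frac{\Var(|X|^2)}{s^2} \leq \frac{5nL_K^4}{s^2}.
\]
Substituting $s = nt$ and dividing through the event by $n$ yields the first claimed bound $5L_K^4/(nt^2)$.

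For the second inequality, the key observation is the factorization
\[
\left|\frac{|X|^2}{n} - L_K^2\right| = \left|\frac{|X|}{\sqrt{n}} - L_K\right|\cdot\left(\frac{|X|}{\sqrt{n}} + L_K\right) \geq L_K\cdot\left|\frac{|X|}{\sqrt{n}} - L_K\right|,
\]
where the inequality uses $|X|/\sqrt{n}\geq 0$ and $L_K>0$. Consequently the event $\{||X|/\sqrt{n}-L_K|\geq t\}$ is contained in $\{||X|^2/n - L_K^2|\geq tL_K\}$, and applying the first inequality with $t$ replaced by $tL_K$ gives the second bound $5L_K^2/(nt^2)$.

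There is no real obstacle here; the entire content is packed into the variance bound from the previous theorem, and the rest is routine Chebyshev plus the elementary factorization of $a^2-b^2$ used to pass from $|X|^2/n$ to $|X|/\sqrt{n}$.
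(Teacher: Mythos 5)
Your proof is correct and follows essentially the same route as the paper: Chebyshev applied to $|X|^2$ using the variance bound from the preceding theorem for the first inequality, and the factorization $|a^2-b^2| = |a-b|(a+b) \geq L_K|a-b|$ (with $a = |X|/\sqrt{n}$, $b = L_K$) to reduce the second inequality to the first.
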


\begin{proof} From the estimate on the variance of $|X|^2$ and Chebyshev's inequality we get $$t^2\mathbb{P}_K\bigg(\bigg|\frac{|X|^2}{n} - L_K^2\bigg| \geq t\bigg) \leq \E_K\bigg(\frac{|X|^2}{n} - L_K^2\bigg)^2 \leq \frac{1}{n^2} \Var(|X|^2) \leq \frac{5}{n} L_K^4.$$

For the second part let $t > 0$. We have \begin{eqnarray*} \mathbb{P}_K (|X| - \sqrt{n}L_K| \geq t\sqrt{n}) &\leq& \mathbb{P}_K(|X|^2 - nL_K^2| \geq tnL_K) \\ &\leq &\frac{5L_K^4}{t^2 n L_K^2} = \frac{5L_K^2}{t^2n}.\end{eqnarray*}\end{proof}

This result confirms the so-called {\em concentration hypothesis} for generalized Orlicz balls. The hypothesis, see e.g. \cite{BK}, states that the Euclidean norm concentrates near the value $\sqrt{n}L_K$ as a function on $K$. More precisely, for a given $\eps > 0$ we say that $K$ satisfies the {\em $\eps$-concentration hypothesis} if  $$\P_K \bigg(\bigg|\frac{|X|}{\sqrt{n}} - L_K\bigg | \geq \eps L_K\bigg) \leq \eps.$$ From Corollary \ref{corcor} we get that the class of generalized Orlicz balls satisfies the $\eps$-concentration hypothesis with $\eps = \sqrt{5} n^{-1 \slash 3}$.

A more complex consequence is the Central Limit Property for generalized Orlicz balls. For $\theta \in S^{n-1}$ let $g_\theta(t)$ be the density of the random variable $\is{X}{\theta}$. Let $g$ be the density of $\NN(0,L_K^2)$. Then for most $\theta$ the density $g_\theta$ is very close to $g$. More precisely, by part 2 of Corollary 4 in \cite{meckes} we get 
\begin{cor} There exists an absolute constant $c$ such that $$\sup_{t\in \R} \bigg|\int_{-\infty}^t \big(g_\theta(s) - g(s) \big)ds\bigg| \leq c \|\theta\|_3^{3\slash 2}.$$ \end{cor}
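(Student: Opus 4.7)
The plan is to invoke Part 2 of Corollary 4 of \cite{meckes} essentially as a black box, having first verified that its hypothesis is satisfied by every generalized Orlicz ball. Meckes' result takes as input a symmetric isotropic convex body whose Euclidean norm concentrates quantitatively --- either through a tail bound on $\big||X|^2 - nL_K^2\big|$ or, equivalently, through a uniform bound on $\sigma_K$ --- and returns exactly the Kolmogorov-type estimate in the corollary, with constant depending on the quality of that concentration.

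First I would locate the exact form of the hypothesis in \cite{meckes} and rephrase it in terms of our parameters. The theorem just proved in this section establishes $\sigma_K \leq \sqrt{5}$ for every generalized Orlicz ball $K$, and Corollary \ref{corcor} converts this into a Chebyshev-type tail inequality for $|X|/\sqrt{n}$ around $L_K$ at rate $O(1/(nt^2))$. Either formulation suffices to feed into Meckes' theorem. The crucial point is that the concentration is uniform in $n$ and across the whole class of generalized Orlicz balls, so the $\sigma_K$-dependence in Meckes' conclusion collapses into the single absolute constant $c$ claimed in the corollary.

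The main (and essentially only) obstacle is bookkeeping of normalizations: one must check that the variance of the limiting Gaussian $g = \NN(0, L_K^2)$, the definition of $L_K$, and the definition of $\|\theta\|_3$ used in \cite{meckes} agree with those adopted here, and rescale if not. Once these are matched, the exponent $3/2$ on $\|\theta\|_3$ and the existence of the uniform constant $c$ are inherited directly from Meckes' statement, and no argument specific to generalized Orlicz balls is needed beyond the concentration already in hand.
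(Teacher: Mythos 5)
Your high-level strategy---invoke Part 2 of Corollary 4 of \cite{meckes} as a black box after checking its hypothesis and normalizations---is exactly what the paper does; indeed the paper's entire proof is the one line ``by part 2 of Corollary 4 in \cite{meckes} we get.'' However, you have misidentified what that hypothesis is. Meckes and Meckes' Corollary 4 is stated for 1-unconditional isotropic vectors satisfying the \emph{square negative correlation} property $\cov(X_i^2,X_j^2)\leq 0$ for $i\neq j$, which in this paper is precisely Corollary \ref{negcov}; it does not take a tail bound on $\big||X|^2-nL_K^2\big|$ or a uniform bound on $\sigma_K$ as input. (The abstract of the paper even says both the concentration of the Euclidean norm \emph{and} the CLT are deduced ``from this'' --- i.e.\ from negative square correlation --- and the $\sigma_K\leq\sqrt 5$ bound and Corollary \ref{corcor} are siblings of the CLT corollary, derived in the same Section \ref{conse} from Corollary \ref{negcov}, not its prerequisites.) Feeding a Chebyshev-type thin-shell estimate into Meckes' Corollary 4 as though it were the stated hypothesis would be a misapplication. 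The repair is immediate --- replace your concentration input with Corollary \ref{negcov} plus the 1-symmetry of generalized Orlicz balls, together with the $\E_K X_i^4\leq 6L_K^4$ moment bound from log-concavity already used earlier in the section --- but as written your proposal cites the wrong fact.
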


\section{The counterexample for 1-symmetric bodies}\label{count}
It is generally known that the negative square correlation hypothesis does not hold in general in the class of 1-symmetric bodies. However, the formula from section \ref{gener} allows us to give a counterexample without any tedious calculations. Let $K \subset \R^3$ be the ball of the norm defined by $$\|(x,y,z)\| = |x| + \max \{|y|,|z|\}.$$ The quantity $m(y,z)$ considered in Corollary \ref{signuncond}, defined as the volume of the cross-section $(\R \times \{y,z\}) \cap K$ is equal to $2 (1 - \max\{|y|,|z|\})$ for $|y|,|z| \leq 1$ and $0$ for greater $|y|$ or $|z|$. To check the inequality (\ref{crossmass}) for $y > \by > 0$ and $z > \bz > 0$ we may assume without loss of generality that $y \geq z$ (as $K$ is invariant under the exchange of $y$ and $z$). 
We have
\begin{eqnarray*} m(y,\bz) m(\by,z) &-& m(y,z) m(\by,\bz) =\\ &=&4 (1- \max\{y,\bz\})(1-\max\{\by,z\}) - 4(1-\max\{y,z\})(1-\max\{\by,\bz\}) \\ &=& 4 (1 - y) (1-\max\{\by,z\}) - 4 (1-y) (1-\max\{\by,\bz\}) \\&=& 4 (1 - y) (\max\{\by,\bz\} - \max\{\by,z\}).\end{eqnarray*}
As $y \leq 1$ all we have to consider is the sign of the third bracket. However, as $z > \bz$, the third bracket is never positive, and is negative when $z > \by$. Thus from Corollary \ref{signuncond} the covariance $\cov(f,g)$ is positive for any increasing symmetric functions $f(Y)$ and $g(Z)$, in particular for $f(Y) = Y^2$ and $g(Z) = Z^2$.

\end{document}